\title{On a colored Tur\'an problem of Diwan and Mubayi}
\author{Ander Lamaison \thanks{Freie Universit\"at, Institut für Mathematik and Berlin Mathematical School, email: \texttt{lamaison@zedat.fu-berlin.de}.} \and Alp M\"uyesser\thanks{Freie Universit\"at, Institut für Mathematik and Berlin Mathematical School, email: \texttt{alp.muyesser@fu-berlin.de}.} \and Michael Tait\thanks{Villanova Univesity Department of Mathematics and Statistics, email: \texttt{michael.tait@villanova.edu}. Research is partially supported by National Science Foundation grant DMS-2011553.}}
\date{\vspace{-5ex}}
\newcommand{\ep}{\varepsilon}
\newcommand{\mex}{\mathrm{mex} }
\theoremstyle{plain}
\newtheorem{theorem}{Theorem}[section]
\newtheorem{lemma}[theorem]{Lemma}
\newtheorem{proposition}[theorem]{Proposition}
\newtheorem{claim}{Claim}[section]
\newtheorem{definition}[theorem]{Definition}
\begin{document}
\maketitle
\begin{abstract}
    Suppose that $R$ (red) and $B$ (blue) are two graphs on the same vertex set of size $n$, and $H$ is some graph with a red-blue coloring of its edges. How large can $R$ and $B$ be if $R\cup B$ does not contain a copy of $H$? Call the largest such integer $\mex(n, H)$. This problem was introduced by Diwan and Mubayi, who conjectured that (except for a few specific exceptions) when $H$ is a complete graph on $k+1$ vertices with any coloring of its edges $\mathrm{mex}(n,H)=\mathrm{ex}(n, K_{k+1})$. This conjecture generalizes Tur\'an's theorem. 
   \par Diwan and Mubayi also asked for an analogue of Erd\H{o}s-Stone-Simonovits theorem in this context. We prove the following asymptotic characterization of the extremal threshold in terms of the chromatic number $\chi(H)$ and the \textit{reduced maximum matching number} $\mathcal{M}(H)$ of $H$. 
   $$\mex(n, H)=\left(1- \frac{1}{2(\chi(H)-1)} - \Omega\left(\frac{\mathcal{M}(H)}{\chi(H)^2}\right)\right)\frac{n^2}{2}.$$
   \par $\mathcal{M}(H)$ is, among the set of proper $\chi(H)$-colorings of $H$, the largest set of disjoint pairs of color classes where each pair is connected by edges of just a single color. The result is also proved for more than $2$ colors and is tight up to the implied constant factor.
   \par We also study $\mex(n, H)$ when $H$ is a cycle with a red-blue coloring of its edges, and we show that $\mex(n, H)\lesssim \frac{1}{2}\binom{n}{2}$, which is tight.  
\end{abstract}

\section{Introduction}
Let $G_1,\ldots, G_r$ be not necessarily distinct graphs \textit{on the same vertex set}, and let the edge set of $G_i$ be colored with color $i$. By $\bigsqcup_{i\in[r]} G_i$, we denote the $r$-edge colored multigraph formed by taking the union of the edge sets $E(G_i)$.
\begin{definition}
Let $H$ be an $r$-edge colored (multi-)graph. By $\mex(r, n, H)$ we denote the maximum integer $T$ such that there exists graphs $G_1,\cdots, G_r$ on the same set of $n$ vertices such that $|E(G_i)|\geq T$ for all $i\in[r]$ and $\bigsqcup G_i$ does not contain a copy of $H$. We define $\mex(r, H)$ to be $\lim_{n\to\infty} \frac{\mex(r,n, H)}{\binom{n}{2}}$.
\end{definition}
When $r=2$, Diwan and Mubayi \cite{DM} initiated the study of the above parameter. They were focused on the case when $H=K_{k+1}$ is a clique with an arbitrary coloring of its edges. They conjectured that when $k\geq 8$, regardless of the edge-coloring of $H$, the extremal threshold is the same as that in the colorless setting, namely, $1-1/k$ (see Tur\'an's theorem \cite{Diestel}). When $k<8$, they conjectured that the same result holds excluding some edge-colorings of $K_{k+1}$. For partial progress on this conjecture, we refer the reader to \cite{DM} and \cite{thomason} (see Section $8$). In this paper, we will be concerned with the question of determining $\mex(r, H)$ when $H$ is not necessarily complete.
\par Recall the Erd\H{o}s-Stone-Simonovits theorem, which states that for any graph $H$, $\textrm{ex}(n, H)=\left(1-\frac{1}{\chi(H)-1}\right)\binom{n}{2} + o(n^2)$. The theorem thus asymptotically characterizes the extremal threshold of any non-bipartite graph. Diwan and Mubayi asked for an analogue of the Erd\H{o}s-Stone-Simonovits theorem in the colorful setting \cite{DM}. As we don't even fully understand the behaviour of $\mex(\cdot)$ on complete graphs, it is premature to hope for a single parameter that characterizes $\mex(r, H)$ for any $r$-edge-colored $H$. Here, we aim to designate a parameter of edge-colored graphs that approximately determines their extremal threshold.
\par Note that any such bound on $\mex(r, H)$ for general $H$ must take into account the coloring associated with the edges of $H$, and therefore depend on a parameter other than just the ordinary vertex-chromatic number of $H$. As an example, observe that for any monochromatic bipartite $H$, $\textrm{mex}(r, H)=0$ whereas when $H$ is a $2$-edge path colored red-blue, $\textrm{mex}(r, H)=1/4$.
\par For a first general upper bound, consider when $r=2$, and $H$ is a red/blue edge-colored $K_{k+1}$. It is easy to see that $\mex(2, H)\leq 1 - \frac{1}{2k}$. Indeed, when $|R|,|B|> \left(1 -\frac{1}{2k}\right)\frac{n^2}{2}$, $|R\cap B|> \left(1-\frac{1}{k}\right)\frac{n^2}{2}$ and by Tur\'an's theorem, $R\cap B$ contains a $K_{k+1}$. Now, regardless of the coloring of $H$, it will be possible to embed $H$ into $R\sqcup B$. Similarly, if $H$ was any $r$-edge-colored $(k+1)$-chromatic graph, if each color class has density more than $1 - \frac{1}{r(\chi(H)-1)}$, then the intersection of all of the colors has density more than $1-\frac{1}{\chi(H)-1}$. The Erd\H{o}s-Stone theorem then tells us that a copy of a supergraph of $H$ where every edge has multiplicity $r$ can be found. Therefore we have $\mex(r, H) \leq \left(1 - \frac{1}{r(\chi(H)-1)}\right)$ for any $H$. Perhaps surprisingly, we will see in Section \ref{sec:construction} that for some $r$-edge-colored $(k+1)$-chromatic graphs, this bound is tight. Hence, any possible analogue of Erd\H{o}s-Stone-Simonovits theorem in this context can only characterize how far away from the trivial upper bound $\mex(r, H)$ is, taking into account the specific $r$-edge-coloring of $H$. 
\par We will now introduce a parameter ($\mathcal{M}(\cdot)$) that is in some sense a measure of how monochromatic a bicolored $H$ is. Using this parameter, we will provide a generalization of the celebrated Erd\H{o}s-Stone-Simonovits theorem into the colorful setting. In short, $\mathcal{M}(H)$ is, among the set of proper $\chi(H)$-colorings of $H$, the largest set of disjoint pairs of color classes where each pair is connected by edges of just a single color. We give a more precise definition in what follows.
\begin{definition}[Reduced graph] Let $G$ be an $r$-edge-colored graph. We say that an $r$-edge-colored multigraph $P$ is a reduced graph of $G$ if all of the following hold. 
\begin{enumerate}[(a)]
    \item\label{a} there exists a function $f\colon V(G)\to V(P)$ such that the existence of a c-colored edge $\{x,y\}$ in $G$ implies that there is a $c$-colored edge between $f(x)$ and $f(y)$ in $P$
    \item\label{b} $f$ induces a proper vertex-coloring of $G$, i.e. each $f^{-1}(\{v\})$ is an independent set for any $v\in V(P)$
    \item No proper induced subgraph of $P$ satisfies (\ref{a}) and (\ref{b}).  
\end{enumerate}
By $\mathcal{R}(G)$, we denote the family of all reduced graphs of $G$.
\end{definition}
\par If $P$ is a multigraph, we denote by $M(P)$ the maximum size of a matching in the subgraph of $P$ made up of the edges of multiplicity $1$. 
\begin{definition}[Reduced maximum matching]
Let $G$ be some $r$-edge-colored graph, and let 
$$\mathcal{M}(G)=\max_{\substack{P\in \mathcal{R}(G)\\ |P|=\chi(G)}} M(P).$$
denote the ``reduced maximum matching number'' of $G$. 
\end{definition}
We can now state our main theorem. 
\begin{theorem}[Multigraph Erd\H{o}s-Stone-Simonovits]\label{thm:maintheorem}
Let $G$ be some $r$-edge colored graph. Then, $$\mex(r, G)\leq 1- \frac{1}{r(\chi(G)-1)} - \frac{\mathcal{M}(G)}{9r\chi(G)^2}.$$ Further, there exist $r$-edge-colored graphs $G$ so that the bound is best possible up to the multiplicative factor $1/9$, whenever $\mathcal{M}(G)\leq \chi(G)/10$.
\end{theorem}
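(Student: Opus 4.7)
My plan is to combine a multicoloured regularity lemma with an Erd\H{o}s--Simonovits stability analysis. First, apply the multicoloured Szemer\'edi regularity lemma to $G_1,\ldots,G_r$ to obtain a partition $V_1,\ldots,V_N$ of the vertex set in which almost every pair is $\delta$-regular in every colour, and form the reduced multigraph $Q$ on $[N]$ whose colour-$i$ edges are the regular pairs of colour-$i$ density at least $\gamma$. For suitably small $\delta,\gamma$, each $|E(Q_i)|/\binom{N}{2}\geq 1-\tfrac{1}{r(\chi(G)-1)}-\tfrac{\mathcal{M}(G)}{9r\chi(G)^2}+\eta/2$ for some $\eta>0$ inherited from the density hypothesis on $G_i$. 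A standard multicoloured counting/embedding lemma reduces the task to the following structural claim in $Q$: there exist $\chi(G)$ vertices $q_1,\ldots,q_{\chi(G)}\in V(Q)$ and a bijection $\pi\colon[\chi(G)]\to V(P)$, where $P\in\mathcal{R}(G)$ achieves $\mathcal{M}(G)$, such that for every $\{u,v\}\in E(P)$ with colour-set $C_{uv}$ and every colour $c\in C_{uv}$, the pair $\{q_{\pi^{-1}(u)},q_{\pi^{-1}(v)}\}$ carries colour $c$ in $Q$.

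To prove this claim, consider the intersection $Q_0=\bigcap_{i\in[r]}Q_i$, whose density is at least $1-\tfrac{1}{\chi(G)-1}-\tfrac{\mathcal{M}(G)}{9\chi(G)^2}+r\eta/2$. If $Q_0\supseteq K_{\chi(G)}$ we are done, since every colour is then present on every edge of that clique and any $\pi$ works. Otherwise $Q_0$ is $K_{\chi(G)}$-free and sits just below the Tur\'an threshold, so Erd\H{o}s--Simonovits stability provides a near-Tur\'an partition $W_1,\ldots,W_{\chi(G)-1}$ of $V(Q)$ in which the density deficit $\tfrac{\mathcal{M}(G)}{9\chi(G)^2}$ is concentrated on within-class pairs. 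A bookkeeping step using the per-colour densities $|E(Q_i)|$, together with the Tur\'an upper bound $\bigl(1-\tfrac{1}{\chi(G)-1}\bigr)\binom{N}{2}$ on the cross-class mass, shows that each colour $Q_i$ still deposits roughly $\tfrac{r-1}{r(\chi(G)-1)}$ of its edge-mass on within-class pairs; hence for $r\geq 2$ every colour appears on a constant fraction of within-class pairs inside each $W_a$.

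I would finish by constructing $\pi$ greedily using the matching structure of $P$. The key observation is that the $\chi(G)$ vertices of $P$ must be distributed among the $\chi(G)-1$ Tur\'an classes, forcing at least one $W_a$ to absorb two vertices of $P$; more generally, I absorb each of the $m=\mathcal{M}(G)$ matched pairs of $P$ into its own Tur\'an class, using $m$ of the $\chi(G)-1$ classes and leaving $\chi(G)-1-m$ for the remaining $\chi(G)-2m$ unmatched vertices of $P$, which is feasible since $m\leq\chi(G)/10$. For each matching edge $\{u,v\}\in E(P)$ of single colour $c_{uv}$, I pick an unused $W_a$ and split it into two pieces on which colour $c_{uv}$ has positive density in $Q$; the within-class mass count from the previous paragraph guarantees such a piece in a positive fraction of the $W_a$'s, and a Hall-type/greedy argument over the $m$ matching edges in turn produces the required assignment. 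Once $\pi$ is fixed, the remaining cross-class edges of $P$ in all required colours are supplied by $Q_0$, which is dense on cross-class pairs by the stability structure.

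The main obstacle, and where the constant $9$ is fixed, is the quantitative matching step: one must rule out adversarial distributions of the missing colours that concentrate on a few Tur\'an classes or a few colours and jointly defeat every choice of $\pi$. The slack $\tfrac{\mathcal{M}(G)}{9r\chi(G)^2}$ per colour is exactly what lets the greedy/Hall argument succeed under a union bound over the $m$ matching edges, with the constant $9$ absorbing the combined losses from stability error, averaging across Tur\'an classes, and the union bound (so the denominator constant is certainly not tight). The passage from the structural statement in $Q$ back to an honest copy of $G\subseteq\bigsqcup_{i\in[r]}G_i$ is then routine by the multicoloured counting/embedding lemma.
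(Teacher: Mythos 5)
Your skeleton (regularity reduction to a reduced graph, then stability applied to the intersection graph, then per-colour within-class mass to serve the matched pairs) is genuinely close to the paper's strategy, but the decisive step is missing and the sketch as written would fail there. After stability you need to choose $\chi(G)$ actual vertices so that \emph{simultaneously} (i) every cross-class pair among them lies in $Q_0$ in all $r$ colours, and (ii) each of the $m$ within-class pairs carries its prescribed single colour. Saying that $Q_0$ is ``dense on cross-class pairs'' does not give (i): the deficit of $Q_0$ may be concentrated entirely on cross pairs (up to roughly a $\tfrac{1}{\chi(G)-2}$ fraction of them can be missing), and worse, the missing cross pairs can be exactly those incident to the vertices where the needed within-class colour-$c_j$ edges live, so a greedy/Hall argument over the matching edges alone cannot rule out this adversarial correlation. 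Also, your intermediate claim that ``every colour appears on a constant fraction of within-class pairs inside each $W_a$'' is false as stated; only the global within-class mass per colour is controlled, and it can concentrate on few classes. The paper resolves exactly this joint-selection difficulty with a device you do not have: it assigns the matching colours to the Tur\'an classes by a uniformly random permutation $\pi\in S_k$, takes inside each assigned class a \emph{maximum cut} $F_j'$ of the colour-$c_j$ graph, forms the uncolored auxiliary graph $H'=R'\cup F_1'\cup\cdots\cup F_m'$, and applies Tur\'an's theorem to $H'$: any $K_{k+1}$ in $H'$ automatically has all cross pairs of multiplicity $r$ and its within-part edges form a matching in the right colours (bipartiteness of each $F_j'$ caps each class at two clique vertices). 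The expectation computation $\mathbb{E}[e(H')]\geq(1-\tfrac1k)\binom{n}{2}$ is where the constant $1/9$ is actually fixed; in your write-up this quantitative core is deferred to an unproved ``union bound'' heuristic. (Note also that the paper needs F\"uredi's exact stability theorem, with deficit $t$ giving at most $t$ deleted edges, rather than asymptotic Erd\H{o}s--Simonovits stability, to keep the constants.)

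Separately, the theorem has a second half --- tightness up to the factor $1/9$ when $\mathcal{M}(G)\leq\chi(G)/10$ --- which your proposal does not address at all. This requires an explicit construction: the paper builds, for even $r$, a $k$-chromatic $r$-coloured graph $H(r,k)$ from a randomly $r$-coloured complete bipartite gadget (so that all large induced bipartite subpairs contain every colour), exhibits host graphs $G_1,\dots,G_r$ of density $1-\tfrac{1}{r(k-1)}-o(1)$ built from a proper edge-colouring of $K_r$ whose union avoids $H$ by a pigeonhole argument, and then adds a red $2m$-clique fully joined in red to boost $\mathcal{M}$ to $m$ while only shifting the threshold by $O(\mathcal{M}/(r\chi^2))$. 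Without some such construction the ``best possible'' clause of the statement remains unproved.
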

\par Although Theorem \ref{thm:maintheorem} is best possible in general up to the constant factors, it is not necessarily tight when $\mathcal{M}(G)$ is near $\chi(G)/2$, for example when $G$ is just a clique on $\chi(G)$ vertices. There are other natural settings in which our Theorem \ref{thm:maintheorem} does not necessarily give a tight answer, and we discuss these problems more in Section 5. 
\par We also study the parameter $\mex(G):=\mex(2, G)$ for some specific classes of graphs.  If $G$ is bicolored and bipartite, observe that if $R\cup B$ avoids $G$, $|R\cap B|=o(n^2)$. Indeed, otherwise $R\cap B$ would contain a large complete bipartite graph by the Erd\H{o}s-Stone-Simonovits theorem. Hence, here we may assume that $R\cap B = \emptyset$ without changing densities. It follows that $\mex(G)\leq 1/2$. Further, the hypothesis that $R\cap B=\emptyset$ brings us to the setting of \cite{TM}. (Indeed, if $R$ and $B$ don't meet, and they both have density at least $\alpha$, their union contains a $(1/2)$-balanced graph of density $2\alpha$.) Using the Theorem 1.1 in \cite{TM}, we can then obtain a better upper bound on $\mex(G)$ when $G$ is bipartite and ``inevitable"\footnote{For definitions of ($1/2$)-balanced and ``inevitable", we refer the reader to $\cite{TM}$}. In particular, in this case it will be that $\mex(G)\leq 1/2 - f(v(G))$ where $f$ is positive, but exponentially small in $v(G)$.
\par From the previous discussion, it follows that for any even bicolored cycle, $\mex(G)\leq 1/2$. We also show that odd bicolored cycles can also be found at this same threshold.  
\begin{theorem}\label{thm:cycle}
Let $C$ be any bicolored cycle. Then, $\mex(C)\leq 1/2$.
\end{theorem}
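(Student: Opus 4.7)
The plan splits on the structure of the bicolouring of $C$. If $C$ is monochromatic, say entirely red, then $|R|\geq(1/2+\varepsilon)\binom{n}{2}$ and since $\chi(C_\ell)\leq 3$, the Erd\H{o}s--Stone--Simonovits theorem immediately yields a red copy of $C_\ell$ for $n$ large. If $C$ is bicoloured with $\ell$ even, then inclusion--exclusion forces $|R\cap B|\geq 2\varepsilon\binom{n}{2}$, so the purple subgraph $P:=R\cap B$ has positive density; since $\chi(C_\ell)=2$, Erd\H{o}s--Stone--Simonovits finds a purple $C_\ell$, and each purple edge plays whichever colour is needed to realise the bicoloring. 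The remaining case, odd $C$ with both colours present, is where the real work lies.

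For that case, I would apply Szemer\'edi's Regularity Lemma jointly to $R$ and $B$ and pass to a reduced $2$-edge-coloured multigraph $H$ on $k\gg 1$ parts whose $R$- and $B$-densities are each $\geq 1/2+\Omega(\varepsilon)$, so its purple density is $\geq\Omega(\varepsilon)$. By the counting lemma, it suffices to produce in $H$ a closed walk of length $\ell$ whose $i$-th step carries the colour of the $i$-th edge of $C$; such a template lifts to $\Omega(n^\ell)$ embedded copies of $C$ in $R\sqcup B$. The template is constructed by splitting on whether $P\subseteq H$ is bipartite.

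In the bipartite subcase, let $X\sqcup Y$ be a bipartition of $P$. Because $R$ has density $>1/2$ in $H$, $R$ is not bipartite, so $R$ contains an edge $uv$ inside $X$ (say): otherwise $R\subseteq X\times Y$ would have at most $|X||Y|\leq k^2/4$ edges, contradicting the density of $R$. Since $\ell-1$ is even, matching the parity of same-side pairs in bipartite $P$, a purple path of length $\ell-1$ from $u$ to $v$ exists, and splicing it with $uv$ yields a $C_\ell$ in $H$ with exactly one strictly-red edge and $\ell-1$ purple edges; rotating so that $uv$ sits at a red position of $C$ (one exists since $C$ is bicoloured) realises the bicoloring. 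In the non-bipartite subcase, $P$ already contains an odd cycle, which can be extended to a closed purple walk of length $\ell$ by replacing one edge with a purple path of the appropriate length (using that $P$ is dense and connected in its dominant component), giving a template for free. The hard part will be the purple-path step in the bipartite case: guaranteeing that the chosen $R$-edge $uv$ has its specific endpoints connected by a purple path of length exactly $\ell-1$. This I would handle by supersaturation --- after cleaning up low-degree vertices of $P$, nearly all same-side pairs in the dense bipartite purple graph are endpoints of $\Omega(n^{\ell-2})$ paths of length $\ell-1$, so among the $\Omega(n^2)$ candidate $R$-edges inside a part, at least one has ``good'' endpoints.
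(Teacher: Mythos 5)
Your handling of the monochromatic and even bicoloured cases is fine and matches remarks already in the paper, and reducing the odd bicoloured case via regularity to finding a colour-respecting closed-walk template in the cluster multigraph is also legitimate (it is essentially Proposition \ref{prop:erdosstone}). The genuine gap is in producing that template, in both of your subcases. In the non-bipartite subcase, any closed walk of odd length $\ell$ in the purple graph $P$ contains an odd cycle of length at most $\ell$, so your ``template for free'' requires the odd girth of $P$ to be at most $\ell$; but a graph of constant density can be non-bipartite, connected, and have odd girth of order $k$ (a dense complete bipartite core with one long thin odd attachment), so this subcase is false as stated. In the bipartite subcase, the supersaturation step you lean on does not exist: $P$ only has density $\Theta(\varepsilon)$, so it can be a complete bipartite graph between sets $X_1\subseteq X$, $Y_1\subseteq Y$ of size roughly $\sqrt{\varepsilon}\,k$, in which case all but an $O(\varepsilon)$-fraction of same-side pairs are joined by \emph{no} purple path of any length, and moreover the red edge $uv$ inside $X$ that you select need not meet the support of $P$ at all. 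Concretely, at the cluster level take $X,Y$ of size $k/2$, sets $X_1\subseteq X$, $Y_1\subseteq Y$, $X_2\subseteq X\setminus X_1$ each of size about $2\sqrt{\varepsilon}\,k$; let purple be all $X_1$--$Y_1$ pairs, red be all $X$--$Y$ pairs plus all pairs inside $X_2$, and blue be all pairs inside $Y$, all pairs inside $X$ except those inside $X_2$, plus the $X_1$--$Y_1$ pairs. Both colours exceed density $1/2$ by a constant multiple of $\varepsilon$, purple is bipartite, yet every within-part red edge has both endpoints isolated in purple, so no choice of $uv$ can be completed by a purple path.

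This is not a removable technicality of your write-up: the unavoidable structure may genuinely require two single-coloured cluster edges rather than one. The paper proceeds differently at exactly this point: it observes that every bicoloured odd cycle embeds in a blow-up of one of two triangles, $\mathbf{T}_1$ (a doubled edge plus two red edges) or $\mathbf{T}_2$ (a doubled edge plus one red and one blue edge), and then proves by a short induction that any bicoloured multigraph avoiding both has at most $n^2/2$ edges counted with multiplicity, forcing one colour class to have at most $n^2/4$ edges; combined with Proposition \ref{prop:erdosstone} this yields the theorem with no path-counting at all. To rescue your approach you would at least have to symmetrise over the two colours (in the example above a blue edge inside $X_1$ does work) and exploit the joint density of red and blue to show some single-coloured edge with purple-connected endpoints, or a $\mathbf{T}_1$-type configuration, always exists --- but that interaction is precisely what your proposal leaves unproved, and it is the actual content of the paper's argument.
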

We include the short proof of Theorem \ref{thm:cycle} in the Discussion section, along with some open problems. In the next section, we give a construction showing that Theorem \ref{thm:maintheorem} is tight up to the factor $\frac{1}{9}$. Then in Sections \ref{sec:regularity} and \ref{sec:upper bound}, we prove the upper bound in Theorem \ref{thm:maintheorem}. 

\section{Construction}\label{sec:construction}
Our goal in this section is to demonstrate the sharpness of Theorem \ref{thm:maintheorem}. For even $r$, we will construct an $r$-edge-colored graph $H$, with chromatic number $k$, such that $\mex(r,H)\geq 1-\frac{1}{r(k-1)}$. First, assuming such an $H=H(r,k)$ for every even $r$ and $k$, let us show that Theorem \ref{thm:maintheorem} is sharp. Given an even $r$, and positive integers $m$ and $k$ such that $10m\leq k$ we will construct a graph $H'$  for which $\mathcal{M}(H')\geq m$, $\chi(H')=k$ and
$$\mex(r, H')\geq 1- \frac{1}{r(\chi(H')-1)} - O\left(\frac{\mathcal{M}(H')}{r\chi(H')^2}\right).$$
\par The construction of $H'$ is as follows. Let $H:=H(r, k-2m)$, and let $H'$ be the union of $H$ and a red $(2m)$-clique, and we add all possible edges between the clique and $H$ in red. It is clear that $\mathcal{M}(H')\geq m$ and $\chi(H')=k$. And as $H'$ contains $H$, it follows that $$\mex(r, H')\geq 1 - \frac{1}{r(k-2m-1)} = 1- \frac{1}{r(\chi(H')-1)} - O\left(\frac{\mathcal{M}(H')}{r\chi(H')^2}\right)$$
where in the last equality we used that $10m\leq k$. 

\subsection{The graph $H$}

The construction of $H$ is as follows. $V(H)$ has size $tk$, where $t$ is a large constant that will be defined later. Denote the vertices by $v_{i,j}$, where $i\in[t]$ and $j\in [k]$. For every pair $j,j'$, between the vertex sets $\{v_{i,j}\}_{i=1}^t$ and $\{v_{i,j'}\}_{i=1}^t$ we have copy of a graph $F$, which is a an $r$-colored complete bipartite graph $K_{t,t}$. We need the colors of the edges of $F$ to satisfy the following property: if the vertex set of $F$ has bipartition $X\cup Y$, then for any pair of sets $X'\subseteq X$, $Y'\subseteq Y$, each with size $\frac{t}{(rk)^k}$, the bipartite graph induced on $X',Y'$ has edges on every color in $[r]$.

\begin{claim}
If $r$ and $k$ are fixed, then for $t$ large enough there is a complete bipartite graph with $t$ vertices in each part and an $r$-coloring of the edges such if $X'\subseteq X$, $Y'\subseteq Y$, and $|X'|, |Y'| \geq \frac{t}{(rk)^k}$, the bipartite graph induced on $X',Y'$ has edges on every color in $[r]$.
\end{claim}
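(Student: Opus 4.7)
The plan is to prove existence via the probabilistic method, coloring each edge of $K_{t,t}$ independently and uniformly at random from $[r]$. Set $s := \lceil t/(rk)^k \rceil$. Since the property in question is monotone (any superset of a color-complete pair is color-complete), it suffices to guarantee that every pair $(X', Y')$ with $|X'| = |Y'| = s$ sees every color.

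For a fixed such pair and a fixed color $c \in [r]$, the probability that none of the $s^2$ edges between $X'$ and $Y'$ receives color $c$ equals $(1-1/r)^{s^2} \leq e^{-s^2/r}$. A union bound over the $r$ colors, followed by a union bound over the at most $\binom{t}{s}^2 \leq t^{2s}$ pairs, shows that the probability of failure is at most
$$r \cdot t^{2s} \cdot e^{-s^2/r}.$$
It then suffices to check that, for fixed $r$ and $k$ and $t$ sufficiently large, this quantity is less than $1$; taking logarithms, this is equivalent to $s^2/r > 2s \log t + \log r$.

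Since $s \geq t/(rk)^k$, the left-hand side grows like $t^2 / \bigl(r (rk)^{2k}\bigr)$, while the right-hand side grows at most like $2 t \log t / (rk)^k$. With $r$ and $k$ fixed, the quadratic-in-$t$ term dominates the linear-in-$t$ term once $t$ is large enough, so the inequality holds, and a random coloring satisfies the desired property with positive probability. This yields the existence of the required coloring. The only thing to be careful about is the quantitative comparison between $s^2/r$ and $s \log t$, but this is a standard first-moment calculation with no conceptual obstacle; in particular no clever combinatorial construction is required.
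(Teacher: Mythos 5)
Your proof is correct and follows essentially the same route as the paper: a uniformly random $r$-coloring of $K_{t,t}$ together with a first-moment/union-bound estimate over all pairs of subsets of the threshold size, showing the failure probability is below $1$ for $t$ large. The only difference is cosmetic (you bound $\binom{t}{s}^2$ by $t^{2s}$ rather than $(et/s)^{2s}$), and your explicit reduction to sets of size exactly $\lceil t/(rk)^k\rceil$ via monotonicity is a fine touch.
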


\begin{proof}

Let $F$ be a complete bipartite graph with $t$ vertices in each part. Color the edges independently and uniformly at random from $[r]$. By the union bound, the expected number of pairs $X',Y'$ of size $\frac{t}{(rk)^2}$ which are missing some color is at most $$r\binom{t}{\frac{t}{(rk)^k}}^2\left(\frac{r-1}{r}\right)^{\left(\frac{t}{rk}\right)^2} \leq r(erk)^{\frac{2t}{(rk)^k}} e^{\frac{-t^2}{r^3k^2}},$$ which is less than 1 for $t$ large enough.
\end{proof}

Putting a copy of $F$ between each of the $k$ parts completes the construction of $H$. Since $H$ is a complete $k$-partite graph it has chromatic number $k$.
\subsection{Constructing $G_1, \cdots, G_r$}
Next we will construct graphs $G_1,\dots, G_r$, each with $(1-\frac{1}{r(k-1)}+o(1)){n\choose 2}$ edges, such that $\bigsqcup G_i$ does not contain a colored copy of $H$. See Figure \ref{fig:test1} for a diagram of the construction in the case when $r=4$ and $k=3$. Each $G_i$ will be on the same vertex set of size $n$ where $n=(k-1)rm$, for some integer $m$ (and $k$ and $r$ fixed as before). Denote the vertices in these graphs by $w_{x,y,z}$, for $x\in[k-1]$, $y\in[r]$ and $z\in[m]$. 

It will be more convenient to define the complement $G_i^c$ of these graphs. The graph $G_r$ will be defined differently from $G_1,\cdots, G_{r-1}$. The edges in $G_r^c$ are precisely the edges of the form $w_{x,y,z}w_{x,y,z'}$ for all $1\leq z,z'\leq m$. Another way to say this is that $G_r$ is a Tur\'an graph on $(k-1)r$ parts.

To define the edge sets of $G_1,\cdots, G_{r-1}$, first let $U$ be a clique $K_r$, where the edges are properly colored with the colors in $[r-1]$ (such a coloring exists because $r$ is even).  For each $i\in[r-1]$, the edges in $G_i^c$ are precisely those of the form $w_{x,y,z}w_{x,y',z'}$, where the edge $yy'$ receives color $i$ in $U$ and for all $1\leq z, z'\leq m$. 

Each of the $G_i^c$ have $(1+o(1))\frac{1}{r(k-1)}{n\choose 2}$ edges, so $e(G_i)/\binom{n}{2} \sim \left(1-\frac{1}{r(k-1)}\right)$.

\begin{figure}
\centering
\begin{minipage}{.6\textwidth}
  \centering
  \includegraphics[width=1\linewidth]{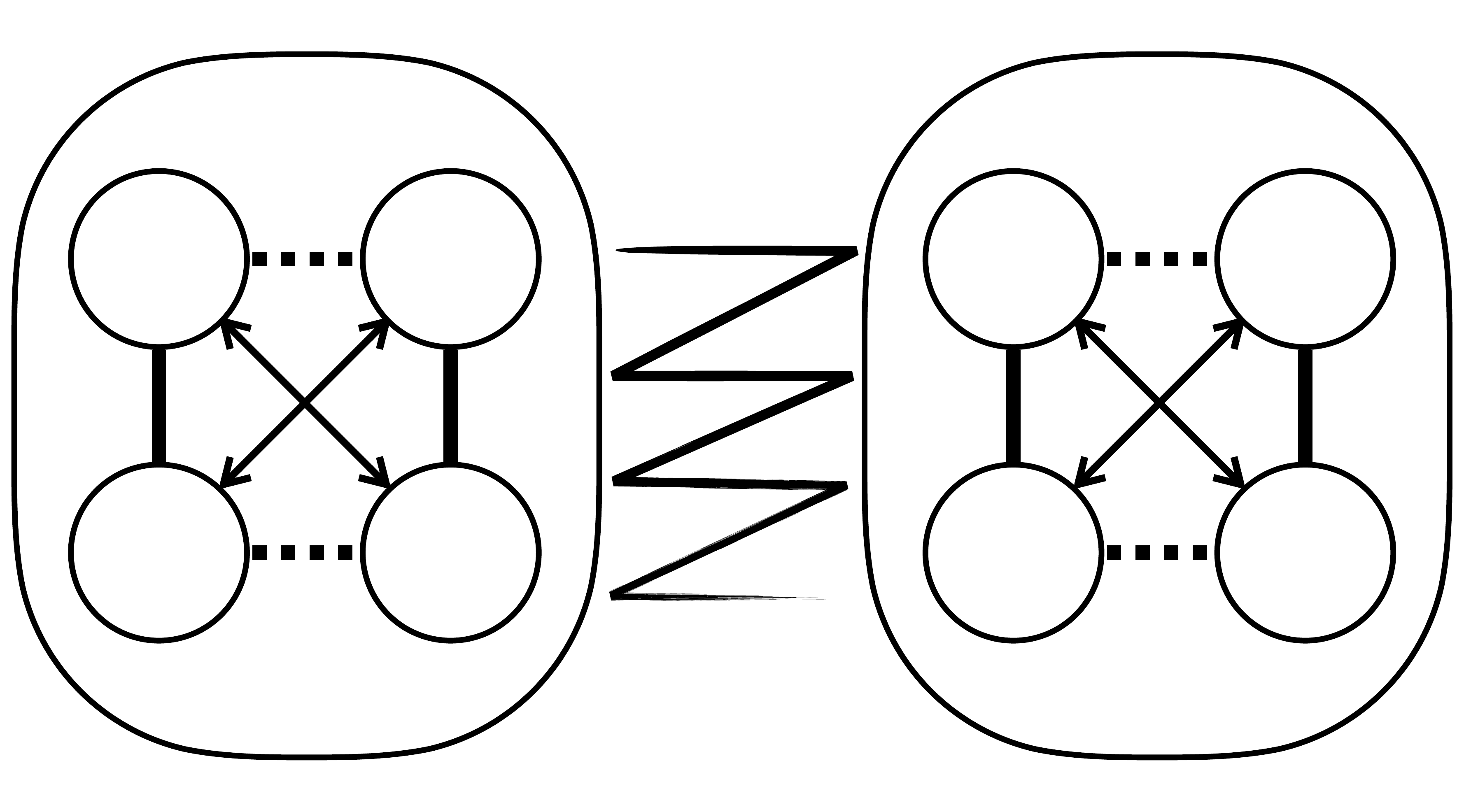}
  \captionof{figure}{An illustration of $G_1,\cdots, G_r$ when $r=4$ and $k=3$. The small circles represent subsets of $n/8$ vertices. $G_4$ is a Tur\'an graph with these circles as the parts. $G_1$ can be obtained by starting with a complete graph, and then removing the edges in the bipartite graphs corresponding to the dotted lines. $G_2$ and $G_3$ is defined similarly, replacing dotted with solid and arrowed, respectively. }
  \label{fig:test1}
\end{minipage}%
%\begin{minipage}{.55\textwidth}
%  \centering
%  \includegraphics[angle=270,origin=c, width=.5\linewidth]{target_1}
%  \captionof{figure}{Target graph}
%  \label{fig:test2}
%\end{minipage}
\end{figure}

\subsection{Showing that $\bigsqcup G_i$ is $H$-free}
We will next prove that $\bigsqcup G_i$ indeed does not contain a copy of $H$. Suppose that there is such a copy of $H$. For each $i$, consider the vector $(x_1^i, y_1^i, x_2^i, y_2^i, \dots, x_k^i, y_k^i)$, where $x_j^i$ and $y_j^i$ are the first and second coordinate of the image of $v_{i,j}$ in $\bigsqcup G_i$. By the pigeonhole principle, there is a subset $I\subseteq [t]$ of at least $\frac{t}{(rk)^k}$ values of $i$ for which the vector defined above is the same.

There are two values $j\neq j'$ such that $x_j^i=x_{j'}^i$ for all $i\in I$. If we have $y_j^i=y_{j'}^i$, then the images of all edges of the form $v_{i,j}v_{i',j'}$ for $i,i'\in I$ are in $G_r^c$. If on the other hand we have $y_j^i\neq y_{j'}^i$, then the images of all edges of the form $v_{i,j}v_{i',j'}$ for $i,i'\in I$ are in $G_q^c$ for some $q\in[r-1]$. Regardless of the case, this contradicts the fact that the bipartite graph formed by these edges contains all colors.

\section{Reduction to reduced graphs}\label{sec:regularity}
Here, we show that the extremal threshold of any graph $G$ is at most as large as the threshold of any of its reduced graphs. We now state the main result of this section.
\begin{proposition}\label{prop:erdosstone}
Let $G$ be an $r$-edge-colored graph, and let $P\in \mathcal{R(G)}$ be one of its reduced graphs. Then, $\mex(r, G)\leq \mex(r, P)$.  
\end{proposition}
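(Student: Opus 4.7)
The plan is to prove the contrapositive via a multicolor regularity argument. Suppose for contradiction that $\mex(r,G) > \mex(r,P) + \varepsilon$ for some $\varepsilon>0$, so that for infinitely many $n$ there is a tuple $G_1,\dots,G_r$ on a common vertex set of size $n$ with each $|E(G_i)| \geq (\mex(r,P)+\varepsilon)\binom{n}{2}$ and $\bigsqcup G_i$ containing no copy of $G$. Apply the multicolor Szemer\'edi regularity lemma to obtain an equipartition $V_1,\dots,V_M$ that is $\varepsilon'$-regular in every color class simultaneously, with $\varepsilon' \ll \varepsilon,\delta,1/v(G)$ and $M$ arbitrarily large. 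Form an $r$-edge-colored cluster multigraph $\tilde{P}$ on vertex set $[M]$ by placing a $c$-colored edge $ij$ whenever the pair $(V_i,V_j)$ is $\varepsilon'$-regular of density at least $\delta$ in $G_c$. A standard cleaning argument shows that, for $\delta$ and $\varepsilon'$ chosen small enough, the color-$c$ edge set of $\tilde P$ has density at least $\mex(r,P)+\varepsilon/2$ for every $c$.

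Once $M$ is large enough, the definition of $\mex(r,P)$ forces $\tilde{P}$ to contain a copy of $P$: an injection $\psi\colon V(P)\to[M]$ such that every $c$-colored edge $uv$ of $P$ corresponds to a $c$-colored edge of $\tilde{P}$, i.e.\ the pair $(V_{\psi(u)},V_{\psi(v)})$ is $\varepsilon'$-regular and $\delta$-dense in $G_c$. Now embed $G$ into this colored blow-up via a ``key lemma'' style greedy embedding. Using the map $f\colon V(G)\to V(P)$ from the definition of reduced graph, partition $V(G)=U_1\cup\dots\cup U_k$ with $U_j=f^{-1}(v_j)$; property (b) makes each $U_j$ independent, and property (a) guarantees that every $c$-colored edge of $G$ between $U_j$ and $U_{j'}$ spans a cluster pair that is regular and dense in color $c$. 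Embed the vertices of $G$ one at a time while maintaining, for each unembedded $x$, a candidate set $T_x\subseteq V_{\psi(f(x))}$, updated at each embedding step by intersecting with the $G_c$-neighborhoods of already-placed neighbors in the appropriate colors. Regularity ensures that $T_x$ shrinks by a factor of at least $\delta-\varepsilon'$ per update, so for $n$ large enough all candidate sets stay well above $\varepsilon'|V_j|$ and a valid copy of $G$ emerges, contradicting $G$-freeness.

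The main obstacle is honoring the coloring: an edge $xy\in E(G)$ of color $c$ must land in $E(G_c)$ specifically, not merely in $\bigsqcup G_i$. This is handled by applying regularity color-by-color and working with $G_c$-neighborhoods during the greedy step; when a pair of vertices in $P$ carries edges of several colors, each color supplies an independent regularity condition on the corresponding cluster pair, and these conditions are used in parallel without interfering. The minimality clause in the definition of $\mathcal R(G)$ is not needed here, only the existence of a color-preserving homomorphism $f$ that also properly vertex-colors $G$.
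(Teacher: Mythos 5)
Your proposal is correct and follows essentially the same route as the paper: multicolor regularity, a cleaning step showing each color class of the cluster multigraph has density above $\mex(r,P)$ so that $P$ appears among the clusters, and then an embedding of $G$ using the homomorphism $f$ from the reduced-graph definition. The only cosmetic difference is that you unroll the greedy candidate-set argument directly on $G$, whereas the paper quotes a colored embedding lemma to find a blow-up $P^{(s)}$ and then places $G$ inside it.
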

The proof will use the regularity lemma and is similar to the regularity based proof of the Erd\H{o}s-Stone theorem. Before we begin, we must state the regularity lemma, starting with the necessary terminology. Let $G:=(A,B)$ be a bipartite graph with $|A|=|B|=n$. For $X\subset A$ and $Y\subset B$ define $d(X,Y) := \frac{e(X,Y)}{|X||Y|}$. We call $G$ $\ep$-regular if for all subsets $X\subseteq A$ and $Y\subseteq B$ with $|X|, |Y|\geq \ep n$ we have $|d(X,Y)-d(A,B)|\leq \ep$. 
\begin{lemma}[Szemer\'edi \cite{regularity}]\label{regularity lemma}
For any $\ep>0$ there exists an $M:=M(\ep)$ such that any graph $G$ can be partitioned into $k$ (where $\frac{1}{\ep}\leq k \leq M$) equal sized parts $(V_i)_{i\in [k]}$ and a junk set $J$ with $|J|<\ep n$ such that all but $\ep$-fraction of the pairs $(V_i,V_j)$ are $\ep$-regular.  
\end{lemma}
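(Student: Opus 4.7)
The plan is to adapt the regularity-based proof of the Erd\H{o}s--Stone--Simonovits theorem to the multicolor setting. Fix $\delta>0$ and suppose, for contradiction, that graphs $G_1,\ldots,G_r$ on a common $n$-vertex set each have more than $\bigl(\mex(r,P)+\delta\bigr)\binom{n}{2}$ edges while $\bigsqcup_i G_i$ contains no colored copy of $G$. Choose auxiliary constants $\ep, d \ll \delta/r$, with $n$ large compared to the resulting parameters.

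First I would apply Lemma \ref{regularity lemma} simultaneously to $G_1,\ldots,G_r$ by iterating the regularity lemma once per color and taking the common refinement. This produces an equipartition $V_1,\ldots,V_k$ (plus a junk set of size $O(r\ep n)$), with $k$ bounded in terms of $\ep$ and $r$, such that for every color $c\in[r]$ all but an $O(\ep)$-fraction of the pairs $(V_a,V_b)$ are $\ep$-regular in $G_c$. I then build an auxiliary $r$-edge-colored multigraph $P^*$ on vertex set $[k]$: place a $c$-colored edge between $a$ and $b$ iff $G_c[V_a,V_b]$ is $\ep$-regular with density at least $d$. Standard accounting shows that for each color $c$ the edges of $G_c$ that are lost (those inside a part, inside an irregular pair, or inside a low-density pair) total at most $(1/k + O(r\ep) + d)n^2$, so each color class of $P^*$ retains more than $(\mex(r,P)+\delta/2)\binom{k}{2}$ edges. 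For $k$ (equivalently $n$) large enough, the definition of $\mex(r,P)$ forces $P^*$ to contain a colored copy of $P$; write $\phi\colon V(P)\hookrightarrow[k]$ for such an embedding.

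Next I would lift $\phi$ to an embedding of $G$ using the map $f\colon V(G)\to V(P)$ guaranteed by part (a) of the reduced-graph definition. Assign each vertex $v\in V(G)$ to the cluster $V_{\phi(f(v))}$. By property (b), adjacent vertices of $G$ land in distinct clusters, so ``inside-cluster'' edges never need to be embedded; by property (a), every $c$-edge $uv$ of $G$ corresponds to a $c$-colored edge of $P^*$, i.e.\ to an $\ep$-regular pair of density at least $d$ in $G_c$. A standard greedy embedding lemma for regular pairs, applied one vertex of $G$ at a time, then yields an actual colored copy of $G$ inside $\bigsqcup_i G_i$, contradicting our assumption.

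The main subtlety is the multicolor embedding step: an unembedded vertex $v\in V(G)$ may have several already-embedded neighbors joined to it by edges of different colors in $G$, so its candidate set inside $V_{\phi(f(v))}$ must satisfy many simultaneous regularity-based constraints. Each such constraint discards at most an $\ep$-fraction of ``atypical'' candidates in the relevant cluster, and the constraints are indexed by at most $v(G)$ previously embedded neighbors across at most $r$ colors, so the candidate set shrinks by a factor of at most $(d-\ep)^{v(G)}-O(r\cdot v(G)\ep)$; since $v(G)$, $r$, $d$ are fixed while $|V_{\phi(f(v))}|=\Theta(n/k)\to\infty$, a valid choice of image for $v$ always exists, completing the embedding.
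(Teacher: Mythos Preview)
Your proposal does not address the stated lemma at all. The statement in question is Szemer\'edi's Regularity Lemma itself, which the paper merely quotes and cites to \cite{regularity} without proof; there is nothing in the paper to compare against because the paper treats this as a black-box external result. What you have written is instead a proof sketch of Proposition~\ref{prop:erdosstone} (the reduction $\mex(r,G)\le\mex(r,P)$): you assume the regularity lemma and use it to build a cluster graph, locate $P$ there, and then embed $G$. That is an \emph{application} of Lemma~\ref{regularity lemma}, not a proof of it.

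If your intention was actually to prove Proposition~\ref{prop:erdosstone}, then your outline is essentially the same argument the paper gives: apply the multicolor regularity lemma, form the multicolor regularity multigraph, do the standard edge-counting to show each color class is dense enough to force a copy of $P$, and then invoke a colored embedding lemma to lift $P^{(s)}$ (hence $G$) into $\bigsqcup_i G_i$. The only cosmetic difference is that the paper packages the greedy embedding as a separate Lemma~\ref{lem:embedding} rather than spelling it out inline. But as a proof of the Regularity Lemma as stated, your submission is simply off-target.
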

We also record a multi-color version of the regularity lemma we will need, which can be proved by iterating the regularity lemma multiple times (c.f. \cite{regularityDIMACS} Theorem 1.18).
\begin{lemma}\label{colored regularity lemma}
For any $\ep>0$ there exists an $M:=M(\ep)$ such that any graph $\bigsqcup_{i\in[r]}G_i$ can be partitioned into $k$ (where $\frac{1}{\ep}\leq k \leq M$) equal sized parts $(V_i)_{i\in [k]}$ and a junk set $J$ with $|J|<\ep n$ such that all but $\ep$-fraction of the pairs $(V_i,V_j)$ are $\ep$-regular in the color $i$ for all $i\in[r]$.  
\end{lemma}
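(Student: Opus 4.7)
The plan is to iterate Szemer\'edi's single-color regularity lemma (Lemma \ref{regularity lemma}) once per color, producing a chain of successively refined equipartitions, and then verify that the final partition is simultaneously $\ep$-regular in every color. I will use the standard strengthening of Lemma \ref{regularity lemma} that takes an initial equipartition $\mathcal{Q}_0$ as input and outputs an $\ep$-regular equipartition refining $\mathcal{Q}_0$; this version follows from the stated one by first subdividing $\mathcal{Q}_0$ into many equal pieces and then applying the regularity lemma with a sufficiently small parameter.

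Starting from the trivial one-part partition, I apply this refinement version to $G_1$ with an auxiliary parameter $\ep'\ll \ep$, producing an $\ep'$-regular equipartition $\mathcal{P}_1$. I then apply it to $G_2$ with initial partition $\mathcal{P}_1$ and the same parameter $\ep'$, producing $\mathcal{P}_2$ which refines $\mathcal{P}_1$ and is $\ep'$-regular for $G_2$. Iterating through all $r$ colors yields a final equipartition $\mathcal{P}_r=(V_1,\dots,V_k)$ with $k\leq M$ parts, where $M$ depends only on $\ep$ and $r$ (an $r$-fold composition of the single-color bound $M(\ep')$, hence a tower-type function). By construction $\mathcal{P}_r$ is $\ep'$-regular for $G_r$; for every earlier color $G_j$, the partition $\mathcal{P}_r$ refines $\mathcal{P}_j$, which was $\ep'$-regular for $G_j$. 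To transfer regularity across the refinement I invoke the standard slicing fact: if $(A,B)$ is $\ep'$-regular in $G_j$ and $A,B$ are each split into boundedly many equal sub-parts, then all but an $O(\sqrt{\ep'})$-fraction of the resulting sub-pairs are $O(\sqrt{\ep'})$-regular with densities close to $d(A,B)$.

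Choosing $\ep'$ small enough that $O(\sqrt{\ep'})<\ep/r$ and summing over the $r$ colors, the total fraction of pairs in $\mathcal{P}_r$ that fail $\ep$-regularity in some color is at most $\ep$. The junk set $J$ accumulates across iterations but stays below $\ep n$ provided $r\ep'<\ep$, which is compatible with the previous requirement. The main (and admittedly mild) obstacle is bookkeeping: coordinating $\ep'$, the final number of parts $k$, and the slicing loss so that all $r$ colors inherit $\ep$-regularity simultaneously and the cumulative junk remains under $\ep n$. No new ideas beyond those in the single-color proof are needed, since the iteration mechanism and the slicing estimate are both classical.
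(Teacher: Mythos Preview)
Your overall plan matches the one-line hint the paper gives (``iterate the regularity lemma''), but the paper itself does not spell out a proof; it simply cites Theorem~1.18 of the Koml\'os--Simonovits survey. Your attempt to make the black-box iteration work has a real gap in the inheritance step.

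The ``standard slicing fact'' you invoke is not standard in the form stated. What slicing actually gives is: if $(A,B)$ is $\ep'$-regular and $A'\subseteq A$, $B'\subseteq B$ satisfy $|A'|\ge\alpha|A|$, $|B'|\ge\alpha|B|$ with $\alpha>\ep'$, then $(A',B')$ is roughly $(\ep'/\alpha)$-regular. Applied to an equipartition of each side into $s$ pieces, this yields only $(s\ep')$-regularity of each sub-pair; there is no inheritance statement that delivers $O(\sqrt{\ep'})$-regularity of most sub-pairs independently of $s$. In your scheme the refinement factor $s$ between $\mathcal P_j$ and $\mathcal P_r$ is bounded only by a tower function of $1/\ep'$, so $s\ep'\to\infty$ as $\ep'\to 0$. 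This is a genuine circular dependence between $\ep'$ and the eventual number of parts, and your bookkeeping remark does not resolve it: no single choice of $\ep'$ makes the inherited regularity parameter small for the earlier colors.

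What the cited reference actually does is iterate the energy-increment \emph{step} inside Szemer\'edi's proof rather than apply the lemma $r$ times as a black box. One takes the index of a partition to be the sum over all $r$ colors of the usual mean-square density; whenever the current partition fails $\ep$-regularity in some color, one refines along the irregular pairs of that color, gaining $\Omega(\ep^5)$ in the total index. Since the total index is bounded above by $r$, the process halts after boundedly many steps, and the resulting partition is simultaneously $\ep$-regular in every color by construction, with no slicing needed.
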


Given a multigraph $H$ with an $r$-coloring of its edges, we define a {\em blow-up of $H$} with $s$ vertices in each part to be the graph where each vertex of $H$ is replaced by an independent set of size $s$ and each edge in $H$ of color $i$ is replaced by a complete bipartite graph in color $i$. We denote this blow-up by $H^{(s)}$.

In order to prove Proposition \ref{prop:erdosstone}, it suffices to show that if $d(G_i)\geq \mex(r, P) + \delta$ for all $i\in [r]$ and $n$ is sufficiently large, then $\bigsqcup G_i$ contains a blow-up of $P$ with $|G|$ vertices on each part. The fact that a sufficiently large blow-up of $P$ will contain a copy of $G$ follows from the definition of $P$ being a reduced graph of $G$.

The regularity lemma gives a streamlined method of finding such blow-ups in dense graphs. Given an $\ep$-regular partition of an $r$-colored graph $G$ where the parts have size $\ell$ and a $d>0$, we define the {\em multicolor regularity multigraph} with parameters $\ep$, $\ell$, and $d$ to be the $r$-colored multigraph where vertices are indexed by the parts $V_i$ and there is an edge in color $c$ between $V_i$ and $V_j$ if the pair $(V_i, V_j)$ is $\ep$-regular with density at least $d$ in color $c$. Applications of the regularity lemma often use an embedding lemma which says that subgraphs of a blow-up of the regularity graph $R^{(s)}$ will also be found in the original graph. The following lemma is a colorful version of such an embedding lemma. The proof is a rewriting of the uncolored version (see Lemma 7.5.2 in Diestel \cite{Diestel}). 

\begin{lemma}\label{lem:embedding}
For all $d\in (0,1]$ and $\Delta, s\geq 1$, there exist an $\ep_0$ and an $L$ such that if $G$ is an $r$-colored graph, $H$ is an $r$-colored multigraph with maximum degree $\Delta$ in each color, and $R$ is a multicolor regularity multigraph of $G$ with parameters $\ep \leq \ep_0$, $\ell \geq L$, and $d$, then 
\[
H\subset R^{(s)} \implies H\subset G 
\]
\end{lemma}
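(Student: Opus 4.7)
The proof is an adaptation of the standard embedding lemma (e.g.\ Diestel's Lemma~7.5.2), with the caveat that each step of the greedy embedding must now respect edge colors. First I would fix an embedding $\phi : V(H) \hookrightarrow V(R^{(s)})$ and let $\psi : V(H) \to V(R)$ be the induced cluster map, so that at most $s$ vertices of $H$ are assigned to any cluster $V_{\psi(x)}$ of $G$. Pick an arbitrary ordering of $V(H)$ and initialize $Y_x := V_{\psi(x)}$ for every vertex $x$. When $x$ is to be embedded, I would choose an image $\phi_G(x) \in Y_x$ that is unused and such that for every unembedded neighbor $y$ of $x$ joined by an edge of color $c$, we have $|N_c(\phi_G(x)) \cap Y_y| \ge (d-\ep)|Y_y|$, where $N_c(\cdot)$ denotes the color-$c$ neighborhood of a vertex; then replace each such $Y_y$ by that intersection.

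The verification that such a $\phi_G(x)$ always exists hinges on two facts that $\ep$-regularity delivers color by color: whenever $|Y_y| \ge \ep \ell$, at most $\ep\ell$ vertices of $V_{\psi(x)}$ fail the above inequality for the pair $(y,c)$; and since $H$ has maximum degree $\Delta$ in each color, $x$ has at most $r\Delta$ (neighbor, color) incidences, so altogether at most $r\Delta \cdot \ep\ell$ vertices of $V_{\psi(x)}$ are excluded as ``bad'', plus at most $s$ already used. At the moment $x$ is embedded, $|Y_x|$ has been shrunk at most $r\Delta$ times, each time by at most a factor of $d-\ep$, so $|Y_x| \ge (d-\ep)^{r\Delta}\ell$. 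Choosing $\ep_0$ small enough that $\ep_0 \le d/2$ and $r\Delta\,\ep_0 \le \tfrac{1}{2}(d/2)^{r\Delta}$, and then $L$ large enough that $\tfrac{1}{2}(d/2)^{r\Delta} L > s$, leaves at least one valid choice for $\phi_G(x)$ at every step; the same inequalities also ensure that $|Y_y|$ stays above $\ep\ell$ throughout, so that the regularity hypothesis continues to apply.

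There is no serious obstacle beyond the single-color version; the only bookkeeping point is that a pair $xy$ in $H$ may carry several colors simultaneously, which is absorbed by a factor of $r$ in the exponent of $(d-\ep)$ and in the bad-vertex estimate. All constants can therefore be determined from $d$, $\Delta$, $s$, and $r$ alone, and the lemma follows.
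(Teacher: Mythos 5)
Your proposal is correct and is exactly what the paper intends: the paper gives no details, citing only that the proof is a rewriting of the uncolored embedding lemma (Diestel, Lemma 7.5.2), and your greedy candidate-set embedding with the $r\Delta$ bookkeeping for (neighbor, color) incidences is precisely that rewriting. The constant choices ($\ep_0 \le d/2$, $r\Delta\ep_0 \le \tfrac{1}{2}(d/2)^{r\Delta}$, $\tfrac{1}{2}(d/2)^{r\Delta}L > s$) make the standard counting go through, so no gap.
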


%The following lemma is integral to regularity based proofs of the Erd\H{o}s-Stone theorem. We state it in a colored setting, yet it can be proved with the same greedy embedding scheme. For example, see Lemma 7.3.2 in Diestel \cite{diestel}. If $\mathcal{C}$ is a collection of colors, by a $\mathcal{C}$-colored edge we mean a pair of vertices $\{u,w\}$ where $\mathcal{C}$ is the set of colors of edges that occur between $\{u,w\}$.

%\begin{lemma}\label{lem:embedding}
%Let $P$ be a reduced graph of an $r$-edge colored graph $H$ and let $d\in(0,1)$. Then, there exists a sufficiently small $\ep\in(0,1)$ such that the following holds. Let $\mathcal{G}:=\bigsqcup_{i\in[r]}G_i$ be some graphs on the same vertex set, and let $\bigsqcup_{i\in[v(P)]} R_i$ be a partition of the vertex set so that each part is sufficiently large and for any subset of colors $\mathcal{C}\subseteq [r]$ the following holds. 
%\begin{enumerate}[(a)]
%    \item For any pair $(R_i, R_j)$ the graph with edges that are the $\mathcal{C}$-edges between $(R_i, R_j)$ is $\ep$-regular.
 %   \item If there exists an edge that is $\mathcal{C}$-colored between vertex $i$ and $j$ in $P$, the density of the $\mathcal{C}$-edges in $(R_i, R_j)$ is at least $d$.
%\end{enumerate}
%Then, $\mathcal{G}$ contains a $v(G)$-blow-up of $P$, and thus, a copy of $H$.  
%\end{lemma}
Using Lemma \ref{lem:embedding}, it suffices to show that $P$ is a subgraph of the multicolor regularity multigraph $R$, for then $P^{(s)}$ will be a subgraph of $R^{(s)}$. Showing this will be a consequence of the regularity lemma. We would like to highlight that this proof is a standard application of the regularity method. We include the details for completeness.
\begin{proof}[Proof of Proposition \ref{prop:erdosstone}]
We let $n$ be sufficiently large, and fix $\bigsqcup_{i\in[r]} G_i$ to be some graphs on the same vertex set $[n]$ such that the density of each $G_i$ is at least $\mex(r, P) + \delta$ for some positive constant $\delta$.  Choose $\ep = \delta/16$ and $d = \delta/4$. Assume that $\ep$ is also chosen small enough that it is less than the $\ep_0$ from Lemma \ref{lem:embedding} with parameters $d$ and $\Delta(P)$ and so that $\mex(r, N, P) \leq \frac{N^2}{2}(\mex(r,P) +\delta/2)$ for all $N\geq \frac{1}{\ep}$. Apply Lemma \ref{colored regularity lemma} to $\bigsqcup_{i\in[r]} G_i$ with regularity parameter $\ep$. Assume that $\{J, V_1,\cdots, V_k\}$ are the parts of the partition and each $V_i$ has size $l$, and let $R$ be the multicolor regularity multigraph. For each color $c$ let $R_c$ be the subgraph of $R$ of the edges of color $c$. We now show that for each $c$, $e(R_c)$ is large. At most $\ep l^2\binom{k}{2}$ $c$-colored edges can be between pairs which are not $\ep$-regular. At most $l^2 \binom{k}{2}d$ $c$-colored edges may be between pairs of density less than $d$. At most $k \binom{l}{2}$ $c$-colored edges may be within one of the parts $V_i$. At most $\ep n^2$ $c$-colored edges may be incident with the junk set $J$. Finally, for each edge in $R_c$ there are at most $l^2$ edges in $G_c$ between the corresponding parts. In total, we have
\[
e(G_c) \leq l^2 e(R_c) + \ep l^2\binom{k}{2}+l^2\binom{k}{2}d+k \binom{l}{2} + \ep n^2\leq l^2 e(R_c) + \ep \frac{l^2k^2}{2} + d\frac{l^2k^2}{2} + \ep \frac{l^2k^2}{2} + 2\ep \frac{l^2k^2}{2},
\]
where the last inequality uses $\ep \geq \frac{1}{k}$ and $n = kl+|J| \le kl+\ep n$. Therefore
\[
e(R_c) \geq \frac{k^2}{2} \left( \frac{e(G_c) - 4\ep - d}{\frac{1}{2}k^2l^2}\right) > \frac{k^2}{2}(\mex(r,P) + \delta/2),
\]
by the choice of $\ep$ and $d$. Since this inequality holds for all colors, and since $\ep$ was chosen so that $\mex(r, N, P) \leq \frac{N^2}{2}(\mex(r,P) +\delta/2)$ for all $N\geq \frac{1}{\ep}$, $P$ is a subgraph of $R$ and therefore $P^{(s)}$ is a subgraph of $R^{(s)}$. If $n$ (and thus $l$) is large enough, applying Lemma \ref{lem:embedding} shows that $P^{(s)}$ is a subgraph of $\bigsqcup_{i\in[r]} G_i$ and hence $G$ is a subgraph of $\bigsqcup_{i\in[r]} G_i$.
%We now choose parameters $\ep, d\in(0,1)$ with satisfying $\ep \ll d \ll \delta $ and also $d\ll 2^{-r}$ and we run the Regularity lemma (Lemma \ref{colored regularity lemma}) with the regularity parameter as $\ep$. This will ensure that for any subset of the colors $\mathcal{C}\subseteq [r]$ we have a regularity partition such that all but $\ep$-fraction of the pairs of clusters will be $\ep$-regular in the graph of the the $\mathcal{C}$-edges. \par Now, for every $\mathcal{C}\subseteq [r]$, we delete all the $\mathcal{C}$-colored edges that are either between non-$\ep$-regular pairs, incident on the junk set, contained inside one of the partitions, or are between pairs where the density of $\mathcal{C}$-colored edges is below $d$. For any color class $c\in[r]$, we delete at most $$\ep n
%^2 + \ep2^rn^2 + d2^{r}n^2$$ $c$-colored edges. Here, we need from our choice of parameters that
%$$\ep + \ep2^r + d2^{r} < \delta $$
%which can easily be attained. Hence, the modified graph still has that each color class $G_i'$ has density more than $\mex(r, G)$. Then by definition, the modified graph contains a copy of $G$. The modifications we have made to the graph ensures that the parts to which $G$ is embedded satisfies the hypotheses of Lemma \ref{lem:embedding}, and hence we may find a large enough blow-up of $P$, completing the proof. 
\end{proof}

\section{The upper bound}\label{sec:upper bound}
In this section, we prove the upper bound in Theorem \ref{thm:maintheorem}. Our main tool will be the following stability result of Füredi.

\begin{theorem}[Füredi, \cite{furedi}]\label{thm:furedi}
Let $G$ be a graph on $n$ vertices without a $K_{k+1}$, and let $t:=\textrm{ex}(n, K_{k+1})-e(G)$. Then, $G$ can be made $k$-partite by deleting at most $t$ edges. 
\end{theorem}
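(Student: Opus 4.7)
The plan is to prove this stability result by induction on $k$, constructing a suitable $k$-partition of $V(G)$. The base case $k=1$ is trivial. For the inductive step, I would first dispose of the easy subcase where $G$ is $K_k$-free: then by induction $G$ can be made $(k-1)$-partite, hence $k$-partite, by deleting at most $\mathrm{ex}(n,K_k)-e(G)\le t$ edges (using the monotonicity $\mathrm{ex}(n,K_k)\le\mathrm{ex}(n,K_{k+1})$). Assume henceforth that $G$ contains a $K_k$ on vertices $A=\{a_1,\ldots,a_k\}$. Since $G$ is $K_{k+1}$-free, every $v\notin A$ has at least one non-neighbor in $A$, so one can seed a $k$-partition by placing $a_i\in V_i$ and each $v\notin A$ into some $V_i$ with $v\not\sim a_i$.

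Next, I would refine the partition by greedy local moves: whenever some vertex can be reassigned to another part so as to strictly decrease the number of within-part (``bad'') edges, perform the move. At termination, every $v\in V_i$ satisfies $d_{V_i}(v)\le d_{V_j}(v)$ for all $j\ne i$, so each vertex has at most $d(v)/k$ neighbors in its own part. This yields only the weak bound $|B|\le e(G)/k$, which by itself falls short of the target $|B|\le t$.

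To close the gap, I would apply Zykov symmetrization to $G$: as long as there exist non-adjacent vertices $u,v$ with $d(u)\ge d(v)$, replace the neighborhood of $v$ by a copy of the neighborhood of $u$. Each such step preserves $K_{k+1}$-freeness and does not decrease the edge count. Iterating, $G$ transforms into a complete multipartite graph $G^*$ on at most $k$ parts (any more would produce a $K_{k+1}$), and the net edge change satisfies $e(G^*)-e(G)\le\mathrm{ex}(n,K_{k+1})-e(G)=t$. The equivalence classes of $G^*$ induce a $k$-partition of the original $V(G)$.

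The main obstacle is translating the single inequality $e(G^*)-e(G)\le t$ into a bound on the number of bad edges of $G$ under this induced partition. A bad edge of $G$ must join two vertices that were eventually absorbed into the same symmetrization class, and hence must have been removed at some symmetrization step; the difficulty is that only the \emph{net} change, and not the total number of removals, is immediately bounded by $t$. I expect this careful edge accounting, perhaps implemented via a potential-function argument or via a more refined symmetrization scheme in which no removed edge is later compensated by an added one within the same class, to be the main combinatorial hurdle in the proof.
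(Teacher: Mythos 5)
This statement is not proved in the paper at all --- it is quoted as a black box from Füredi's paper --- so the only question is whether your argument stands on its own, and as written it does not. The gap you flag in your last paragraph is not a technicality but the entire content of the theorem, and the symmetrization route as described cannot close it: Zykov symmetrization only controls the \emph{net} change $e(G^*)-e(G)\le t$, while each individual step deletes all of $v$'s old edges (possibly $\Theta(n)$ of them) and compensates with new ones, so the total number of deletions --- and in particular the number of original edges of $G$ lying inside a final symmetrization class --- can exceed $t$ by an unbounded amount; no potential-function bookkeeping on the net quantity alone can recover a bound on it. There are also secondary issues you would have to repair: with the non-strict condition $d(u)\ge d(v)$ the process need not terminate, and the claim that the terminal graph is complete multipartite on at most $k$ parts (i.e.\ that non-adjacency becomes an equivalence relation) itself requires an argument. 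Your intermediate ``greedy local moves'' step is correct but, as you note, only gives $e(G)/k$, so the proposal ends without a proof.

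The missing idea is in fact a sharpening of your \emph{first} paragraph rather than of the symmetrization: run the Erd\H{o}s degree-majorization proof of Tur\'an's theorem while carrying the error term along. Take $x$ of maximum degree $\Delta$, set $A=N(x)$ (so $|A|=\Delta=:a$) and $B=V\setminus A$; since $G[A]$ is $K_k$-free, induction on $k$ gives a $(k-1)$-partition of $A$ with at most $\mathrm{ex}(a,K_k)-e(G[A])$ bad edges, and one takes $B$ as the $k$-th part. The bad edges then number at most $\mathrm{ex}(a,K_k)-e(G[A])+e(G[B])$, and since $e(A,B)+2e(G[B])=\sum_{v\in B}d(v)\le a(n-a)$ and the complete $k$-partite graph consisting of a Tur\'an graph $T_{k-1}(a)$ fully joined to an independent set of size $n-a$ witnesses $\mathrm{ex}(a,K_k)+a(n-a)\le \mathrm{ex}(n,K_{k+1})$, this is at most $\mathrm{ex}(n,K_{k+1})-e(G)=t$, closing the induction with the exact constant. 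That counting step --- comparing against the Tur\'an number via the max-degree vertex rather than via symmetrization --- is what your sketch is missing.
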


Let $G$ be an $r$-edge colored multigraph with $E_1,\ldots,E_r$ denoting the edge sets in colors $1,\ldots, r$ respectively. Let $\chi(G) = k+1$ and let $U_1,\ldots, U_{k+1}$ be the color classes of a proper $\chi$-coloring of $G$. Define an $r$-edge colored multigraph $P$ on vertex set $[k+1]$ where $xy$ is an edge of $P$ in color $i$ if and only if $U_x\cup U_y$ contains an edge in $E_i$. Then $P$ is a reduced graph of $G$. 

Let $m := M(P)$ be the maximum size of a matching in the subgraph made of the edges of multiplicity $1$ in $P$. Let $e_1,\ldots, e_m$ be a maximum matching of multiplicity $1$ edges in $P$. Define an $r$-edge colored multigraph $P'$ on vertex set $[k+1]$ that has a matching of size $m$ of single edges with the same colors as $e_1,\ldots, e_m$ and the remaining edges have multiplicity $r$ with $1$ edge of every color. Then $P$ is a subgraph of $P'$ and so by Proposition \ref{prop:erdosstone}, it suffices to show that 
\[
\mex(r, P)  \leq \mex(r, P') \leq 1 - \frac{1}{rk} - \frac{m}{9rk^2},
\]

To show this, assume that $H$ is an $n$-vertex $r$-edge colored multigraph and for $1\leq i\leq r$ let $H_i$ be the simple graph of the color $i$ edges of $H$. Assume that 
\[
e(H_i) > \left(1 - \frac{1}{rk} - \frac{m}{9rk^2}\right) \binom{n}{2},
\]
for all $i$. We will show that $H$ contains $P'$ as a subgraph. 

Let $R$ be the subgraph of $H$ of edges of multiplicity $r$. That is, $E(R) = \bigcap H_i$. Then 
\[
e(R) \geq \left(1 - \frac{1}{k} - \frac{m}{9k^2}\right)\binom{n}{2}.
\]

If $R$ contains $K_{k+1}$ as a subgraph, then $H$ contains $rK_{k+1}$ and hence $P'$, so we may assume that $R$ is $K_{k+1}$-free. Hence, by Tur\'an's theorem
\[
e(R) \leq \left(1 - \frac{1}{k}\right) \binom{n}{2}.
\]

By Theorem \ref{thm:furedi}, $R$ has a $k$-partite subgraph $R'$ satisfying
\[
e(R') \geq \left(1 - \frac{1}{k} - \frac{2m}{9k^2}\right) \binom{n}{2}.
\]
Assume that $V_1,\ldots, V_k$ are the partite sets of $R'$. Let $H_i'$ be the subgraph of $H_i$ consisting of all edges that have both endpoints in one of the partite sets. That is
\[
H_i' = \bigcup_{j=1}^k H_i[V_j].
\]
Then we have 
\[
e(H_i') \geq \left(1-\frac{1}{rk} - \frac{m}{9rk^2}\right) \binom{n}{2} - \left(1- \frac{1}{k}\right)\binom{n}{2}= \left(\frac{r-1}{r}\frac{1}{k} - \frac{m}{9rk^2}\right)\binom{n}{2},
\]
for each $i$.

Assume that $e_1,\ldots, e_m$ have colors $c_1,\ldots, c_m$ respectively. Choose $\pi \in S_k$ uniformly at random. For $1\leq j\leq m$, let $F_i$ be the graph in color $c_i$ induced by $V_{\pi(i)}$. That is, $F_i = H_{c_i}[V_{\pi(i)}]$. Let $F_i'$ be the subgraph of $F_i$ given by a maximum cut of $F_i$. Finally, consider the uncolored simple graph 
\[
H' = F_1'\cup \cdots \cup F_m' \cup R'.
\]

We claim that if $H'$ contains $K_{k+1}$, then $P'$ is a subgraph of $H$. To see this, assume that $K_{k+1}$ is a subgraph of $H'$. { We argue that the vertices corresponding to those of $K_{k+1}$ in $H'$ induce a $P'$ in $H$. Note that all edges of the $K_{k+1}$ that are not contained in a single part $V_i$ (i.e. edges that come from $R'$) have multiplicity $r$ in $H$. Further, the $K_{k+1}$ can draw at most two vertices from each $V_i$, as each $F_i'$ is a bipartite graph. Thus, the edges of $H'$ that come from the $F_i'$ form a matching. Hence, $H'$ corresponds to a multigraph in $H$ all of whose edges have multiplicity $r$ with $1$ edge of every color, except for a matching of size at most $m$ with single edges whose colors are a submultiset of $\{c_1, \ldots, c_m\}$. This structure contains a  $P'$.}

To complete the proof, we show that there is a choice of $\pi$ so that $H'$ contains $K_{k+1}$ by showing that $\mathbb{E}[e(H')] \geq \left(1 - \frac{1}{k}\right)\binom{n}{2}$ and applying Tur\'an's theorem.

Note that 
\[
\mathbb{E}[e(H')] = e(R') + \sum_{j=1}^m \mathbb{E}[e(F_i')] \geq \left(1 - \frac{1}{k} - \frac{2m}{9k^2}\right)\binom{n}{2} + \frac{1}{2}\sum_{j=1}^m \mathbb{E}[e(F_i)].
\]

Now, for each $j$, each edge in $H_{c_j}'$ has a $\frac{1}{k}$ chance of being in $F_j$. Therefore,
\[
\mathbb{E}[e(F_j)] \geq \frac{1}{k}\left(\frac{r-1}{r}\frac{1}{k} - \frac{m}{9rk^2}\right)\binom{n}{2},
\]
for $1\leq j\leq m$. Using $r \geq 2$ and $m< k$ gives
\[
\mathbb{E}[H'] > \left(1 - \frac{1}{k} - \frac{2m}{9k^2}\right)\binom{n}{2} + \frac{m}{2k}\left(\frac{1}{2}\frac{1}{k} - \frac{1}{18k}\right)\binom{n}{2} = \left(1 - \frac{1}{k}\right)\binom{n}{2}.
\]

\section{Discussion}
As we remarked in the introduction, it makes sense to study the $\mex(\cdot)$ parameter for specific families of graphs. When $r=2$, and when $G$ is bipartite, we showed that this problem reduces to a setting investigated in $\cite{TM}$. At any rate, the bound $\mex(G):=\mex(2, G)\leq 1/2$ is true, regardless of the coloring of $G$. If one wishes to see at which threshold a bicolored cycle will emerge in $R\cup B$ regardless of the coloring, the previous bound only leaves open the case of odd cycles. As stated in Theorem \ref{thm:cycle}, it turns out that the same upper bound of $1/2$ also holds for odd cycles. \begin{proof}[Proof of Theorem \ref{thm:cycle}]
Let $\textbf{C}$ be a bicolored odd cycle. If $\textbf{C}$ is monochromatic, then the result follows from the (uncolored) Erd\H{o}s-Stone-Simonovits theorem, and so we may assume that there is a path on two edges in $\mathbf{C}$ with colors RB. Since $\textbf{C}$ has an odd number of edges, it also contains a path on two edges colored either RR or BB. Assume that there is a RR path (if there is only a BB path, switch the colors in the remainder of the proof). Let $\textbf{T}_1$ be the bicolored triangle with one double edge, and the other two edges colored red. And let $\textbf{T}_2$ be the bicolored triangle with one double edge, one red edge, and one blue edge. It follows that $\textbf{C}$ can be embedded in a sufficiently large blow-up of either $\textbf{T}_1$ or $\textbf{T}_2$.
\par So the bound $\mex(\{\textbf{T}_1, \textbf{T}_2\})\leq 1/2$ would suffice to deduce the theorem, by Proposition \ref{prop:erdosstone}. We will actually show something stronger.
\begin{claim}
Let $\textbf{G}$ be a bicolored graph avoiding both $\textbf{T}_1$ and $\textbf{T}_2$. Then $e(\textbf{G})\leq \frac{n^2}{2}$
\end{claim}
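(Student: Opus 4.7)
My plan is to derive a local degree inequality at each double edge and combine these via a matching argument. Write $R$ and $B$ for the red and blue edge sets of $\textbf{G}$, let $D=R\cap B$ be the set of double edges and $S=R\cup B$, so that $e(\textbf{G})=|R|+|B|=|S|+|D|$. Avoiding $\textbf{T}_1$ and $\textbf{T}_2$ translates into the local rule that for every $uv\in D$ and every third vertex $w$, if $uw$ and $vw$ both lie in $S$, then neither may lie in $R$. Counting neighborhoods, this rule forces $N_R(u)\setminus\{v\}$ and $N_S(v)\setminus\{u\}$ to be disjoint subsets of $V\setminus\{u,v\}$, yielding the key inequality
\[
d_R(u)+d_S(v)\le n\qquad\text{for every }uv\in D,
\]
together with the symmetric version $d_R(v)+d_S(u)\le n$. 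I would also record a second structural consequence: for every vertex $v$, the set $N_D(v)$ of double-edge neighbors is an independent set in $S$, since two such neighbors together with $v$ would trigger a forbidden triangle.

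The goal is to show $\sum_v d(v)\le n^{2}$, where $d(v)=d_R(v)+d_B(v)$, since this is exactly $e(\textbf{G})\le n^{2}/2$. Adding the two local inequalities at each $uv\in D$ gives $d(u)+d(v)+r_u+r_v\le 2n$, where $r_v=d_R(v)-|N_D(v)|$ is the red-only degree. I would then fix a maximum matching $M\subseteq D$ and sum this inequality along $M$ to obtain $\sum_{v\in V(M)}d(v)\le 2n|M|$. If $M$ is a perfect matching this already yields $\sum_v d(v)\le n^{2}$ and closes the proof; otherwise the unmatched set $W=V\setminus V(M)$ is independent in $D$ (by maximality of $M$), and each $w\in W$ with a double edge has $N_D(w)\subseteq V(M)$. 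Combining the constraint $d_R(w)+d_S(u)\le n$ for $u\in N_D(w)$ with the independence of $N_D(w)$ in $S$ should allow one to absorb the contribution of $W$ into the matched sum.

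The hardest part of the argument is this absorption step. A crude estimate $d(w)\le n-1+|N_D(w)|$ at an unmatched vertex only yields $\sum_v d(v)\le n(n-1)+2|D|$, which is too weak once $|D|>n/2$. To close the estimate one must exploit the constraints at all double edges incident with $w$ simultaneously, not just one of them; equivalently, passing to a fractional matching/LP-dual of the simple graph $D$ lets one bypass the case split between matched and unmatched vertices. The extremal constructions $R=B=K_{n/2,n/2}$ and $R=\text{perfect matching}\subseteq B=K_n$ both attain $e(\textbf{G})=n^{2}/2$, and the need to accommodate both simultaneously suggests that the correct bookkeeping is a tight trade-off between double-edge mass and blue-only mass, which is precisely what the inequality above encodes.
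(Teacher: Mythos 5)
Your reductions are correct up to exactly the point you flag yourself: for every $uv\in D$ one does get $d_R(u)+d_S(v)\le n$ and its mirror, hence $d(u)+d(v)+r_u+r_v\le 2n$, summing over a matching $M\subseteq D$ controls $V(M)$, and vertices with no incident double edge satisfy $d(v)\le n-1$. But the absorption of the $D$-covered vertices outside $V(M)$ is precisely where the proof is missing, and the fix you hint at (a fractional matching/LP-dual of $D$) does not go through with the inequalities you have derived. First, $D$ restricted to its non-isolated vertices need not admit a fractional perfect matching (a star $K_{1,3}$ inside $D$ already has none), so you cannot weight the edge inequalities so that every $D$-covered vertex is counted with total weight exactly one. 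Second, and more seriously, the constraint system you propose to dualize is genuinely too weak: let $D$ be a star with center $c$ and leaves $l_1,l_2,l_3$, and consider the degree assignment $d(c)=6$, $d(l_i)=2n-6$, all red-only degrees zero, and $d(v)=n-1$ for the remaining vertices. This satisfies every inequality in your system ($d(c)+d(l_i)+r_c+r_{l_i}\le 2n$, the trivial bound $d(v)\le 2(n-1)$, and $d(v)\le n-1$ off the support of $D$), yet $\sum_v d(v)=n^2+n-8>n^2$ once $n>8$. So closing the estimate requires injecting further structure quantitatively (for instance the triangle-freeness of $D$, or the independence of $N_D(v)$ in $S$), which you have not done; as written the main step is a genuine gap, not a routine verification.

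For comparison, the paper sidesteps all of this bookkeeping with a short induction on $n$: fix a red (possibly double) edge $\{x,y\}$ (if none exists, all edges are blue and $e(\textbf{G})\le\binom{n}{2}$); any other vertex $v$ sends at most two edges into $\{x,y\}$, since three edges would force a double edge at $vx$ or $vy$, which together with the red edge $xy$ and the remaining edge at $v$ forms a $\textbf{T}_1$ or $\textbf{T}_2$; hence $e(\textbf{G})\le e(\textbf{G}\setminus\{x,y\})+2(n-2)+2\le\frac{(n-2)^2}{2}+2(n-2)+2=\frac{n^2}{2}$. If you want to salvage your global degree-counting approach you will need a new idea at the unmatched vertices; the deletion-and-induct step is the cleaner route, and it uses the red edge $xy$ itself as the pivot rather than the double edges.
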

Therefore, in any $\textbf{T}_1 / \textbf{T}_2$ avoiding $R\cup B$, one color class has at most $n^2/4$ edges, which implies that $\mex(\{\textbf{T}_1, \textbf{T}_2\})\leq 1/2$.

 To prove the claim, hence the theorem, we will proceed by induction. The base cases of $n=1$ and $n=2$ are clear. Let us fix an edge $\{x,y\}$ that is colored red (possibly also blue). If there wasn't one, there are at most $\binom{n}{2}$ edges (all blue) in the graph, and we are done.
\par By induction, $e(\textbf{G}\setminus \{x,y\})\leq \frac{(n-2)^2}{2}$. Further, for any $v\in G\setminus \{x,y\}$, $d(\{v\},\{x,y\})\leq 2$. Indeed, otherwise $v$ must send at least $3$ edges to $\{x,y\}$ and so they must include at least $1$ double edge. Then $\{v,x,y\}$ is either a $\textbf{T}_1$ or a $\textbf{T}_2$. In total,
$$ e(\textbf{G})\leq e(\textbf{G}\setminus \{x,y\}) + e(\textbf{G}\setminus \{x,y\}, \{x,y\}) + 2 \leq \frac{(n-2)^2}{2}+2(n-2)+2=\frac{n^2}{2}$$
\end{proof}

\par Note that our general Erd\H{o}s-Stone-Simonovits type theorem (Theorem \ref{thm:maintheorem}) cannot give the above optimal bounds in the above problem as $\chi(G)=3$ and $\mathcal{M}(G)\leq 1$ when $G$ is an odd cycle. Hence, we had to take a more direct approach. Although in general Theorem \ref{thm:maintheorem} is tight, the example we gave for tightness in Section \ref{sec:construction} featured a graph that is as dense as possible while having a particular chromatic number. Indeed, it was critical for our construction that the graph $H$ had clique number equal to its chromatic number, in fact, $H$ contained a large blow up of a clique of order $\chi(H)$. Outside this domain, we do not know if Theorem \ref{thm:maintheorem} is tight. It could be interesting to investigate $\mex(\cdot)$ in other sparser settings in an effort to determine if a more specific Erd\H{o}s-Stone-Simonovits theorem could be established in this setting, which would give a more complete answer to the question asked by Diwan and Mubayi. 
\par One natural direction is to study the average case. Say $G:=G(n, 1/2)$, the uniformly random graph, equipped with a uniformly random red-blue coloring of its edges. We know that $\chi(G)=(1/2+o(1))n/\log_2 n$ with high probability. It would be interesting to give bounds on $\mex(G)$ with high probability. It seems likely that the upper bound from our Theorem \ref{thm:maintheorem} would not be tight here. 
\par Another intriguing open problem is the one Diwan and Mubayi originally studied, namely determining $\mex(G)$ when $G:=K_n$ and $G$ is equipped with an arbitrary coloring of its edges. This problem seems quite hard, and restricting attention to almost all colorings of $K_n$ while aiming to determine $\mex(K_n)$ already seems to be a challenging question.
\par Finally, there are many natural variants or particular cases of this problem that have been studied and interesting open questions are pervasive. We conclude with a selection of these: forbidding rainbow triangles was considered in \cite{rainbowFlag} and \cite{rainbowMantel}; forbidding nonmonochromatic triangles was considered in \cite{DMM}; a survey of a more general problem with weights was given in \cite{thomasonSurvey}; an inverted version of the problem was asked in \cite{BC}; finding multiple rainbow cliques was studied in \cite{multipleRainbow}.

\bibliographystyle{plain}
\bibliography{bib}

\end{document}